\documentclass[12pt]{amsart}
\usepackage{amsmath,amsfonts,latexsym,graphicx,amssymb,url}
\usepackage{hyperref,pdfsync}
\usepackage{amsmath,txfonts,pifont,bbding,pxfonts,manfnt}
\usepackage[active]{srcltx}
\usepackage{wasysym,pstricks}

\setlength{\headheight}{10pt} \setlength{\topmargin}{6pt}
\setlength{\headsep}{20pt} 
\setlength{\textwidth}{17cm} \setlength{\textheight}{22.5cm}
\setlength{\oddsidemargin}{0cm} 
\setlength{\evensidemargin}{0cm} 

\newcommand{\sign}{\operatorname{sign}}
\newcommand{\Lip}{\operatorname{Lip}}

\newcommand{\R}{{\mathbb R}} 

\renewcommand{\S}{{\mathbb S}}

\newcommand{\N}{{\mathbb N}}
\newcommand{\Ha}{{\mathcal H}}
\renewcommand{\H}{{\mathbb H}}


\newtheorem{theorem}{Theorem}

\newtheorem{lemma}[theorem]{Lemma}
\newtheorem{proposition}[theorem]{Proposition}

\newtheorem{remark}[theorem]{Remark}

\sloppy

\begin{document}

\title[Lipschitz extensions of maps between Heisenberg groups --- \today]{Lipschitz extensions  of maps between  Heisenberg groups}
\author[Zolt\'an M.~Balogh, Urs Lang, Pierre Pansu --- \today]{Zolt\'an M.~Balogh, Urs Lang, Pierre Pansu}
\date{\today}
\keywords{Heisenberg group, Lipschitz extension property\\
{\it 2010 Mathematics Subject Classification: 43A80} }

\thanks{Z.M.B.~is supported by MAnET Marie Curie Initial 
Training Network and the Swiss National Science Foundation, U.L.~is supported by the Swiss National Science Foundation, 
P.P.~is supported by MAnET Marie Curie Initial 
Training Network and Agence Nationale de la Recherche, ANR-10-BLAN 0116.}
\begin{abstract}
Let $\H^n$ be the Heisenberg group of topological dimension $2n+1$. We prove that if $n$ is odd, the pair of metric spaces $(\H^n, \H^n)$ does not have the Lipschitz extension property.
\end{abstract}
\address{Department of Mathematics, University of Bern, Sidlerstrasse 5, CH-3012  Bern,  Switzerland}
\email{zoltan.balogh@math.unibe.ch}
\address{Department of Mathematics, ETH Z\"urich, R\"amistrasse 101, CH-8092 Z\"rich, Switzerland}
\email{urs.lang@math.ethz.ch}
\address{Department of Mathematics, Universit\'e Paris-Sud, B\^atiment 405, 
F-91405, Orsay, France}
\email{pierre.pansu@u-psud.fr} 

\maketitle


\setcounter{equation}{0}

\section{Introduction} 

Since globally defined Lipschitz mappings have good Rademacher type differentiability properties, it is a matter of general interest to provide extension theorems for partially defined Lipschitz mappings. In this context, the following concept is useful: A pair of metric spaces $(X,Y)$ is said to have the {\it Lipschitz extension property} if there exists a constant $C\geq 1$ such that any Lipschitz mapping $f \colon A \to Y$, $A\subset X$, has a Lipschitz extension $F \colon X \to Y$ with the property that $\Lip(F) \leq C\cdot \Lip(f)$. Here, we used the notation $\Lip(F) := \inf \{ L\geq 0: d_Y(F(x),F(y)) \leq L d_X(x,y) \ \text{for all} \ x,y\in X \}$ and similarly for $\Lip(f)$.
 
There is a long history of results providing examples of metric pairs $(X,Y)$ with or without the Lipschitz extension property. The first such result is due to McShane~\cite{McShane} who proved that for an arbitrary metric space $X$, the pair $(X,\R)$ has the Lipschitz extension property, even with  constant $C=1$. This result was followed by the theorem of Kirszbraun~\cite{Kirszbraun} who proved the Lipschitz extension property, also with constant $C=1$, for pairs of Euclidean spaces $(\R^m, \R^n)$. Kirszbraun's theorem was generalized by Valentine~\cite{Valentine} to pairs of general Hilbert spaces $(H_1, H_2)$. 
 
The Lipschitz extension problem becomes more challenging if the target space $Y$ has no linear structure. An generalization of the Kirszbraun--Valentine theorem to metric spaces with a curvature bound in the sense of Alexandrov was established by Lang and Schroeder~\cite{Lang-Schroeder}, and the case of an arbitrary metric space $X$ and a Hadamard manifold $Y$ was discussed in~\cite{Lang-Pavlovic-Schroeder}. In the paper of Lang and Schlichenmaier~\cite{Lang-Schlichenmaier}, a general sufficient condition for the Lipschitz extension property of a metric pair $(X,Y)$ is given.

The Lipschitz extension property for a pair of general Carnot groups $(G_1, G_2)$ with their canonical sub-Riemannian Carnot-Carath\'eodory metrics is studied for particular pairs of Carnot groups in~\cite{Balogh-Faessler}, \cite{Rigot-Wenger}, \cite{Wenger-Young}. The situation is fully understood for the pair $(\R^n, \H^m)$, where $G_1= \R^n$ is the Euclidean space of dimension $n$ and $\H^m$ is the Heisenberg group of topological dimension $2m+1$. In this case, it is known that the pair $(\R^n, \H^m)$ has the Lipschitz extension property if and only if $n\leq m$. The "only if" part of this equivalence  is due to Balogh and F\"assler~\cite{Balogh-Faessler}, whereas the "if" part is due to Wenger and Young~\cite{Wenger-Young}. 

\medskip 
In this note, we shall study the Lipschitz extension property for pairs of Heisenberg groups $(\H^n, \H^m)$, a question that was posed in~\cite{Balogh-Faessler}. Let us observe first that since $\H^n$ contains isometric copies of $\R^n$, it follows from~\cite{Balogh-Faessler} that if $(\H^n, \H^m)$ has the Lipschitz extension property, then we must necessarily have that $n\leq m$. 

The reason of this restriction on dimensions is as follows: It is a well-known fact of sub-Riemannian geometry that smooth Lipschitz mappings between sub-Riemannian spaces must satisfy the {\it contact condition}. This means that the horizontal distribution defining the sub-Riemannian metric must be preserved by the tangent map. This property gives an obstruction for the existence of smooth Lipschitz extensions. This principle can be transferred even for possibly non-smooth Lipschitz extensions, allowing us to prove the main result of this note:

\begin{theorem} \label{main}
If $n \geq 2k+1$, then the pair $(\H^n,\H^{2k+1})$ does not have the Lipschitz extension property. 
\end{theorem}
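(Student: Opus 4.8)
The plan is to show that the Lipschitz extension property fails by producing a specific map $f$ from a subset $A \subset \H^n$ into $\H^{2k+1}$ that admits no uniformly Lipschitz extension to all of $\H^n$. The guiding principle, as indicated in the introduction, is the contact condition: any Lipschitz map between Heisenberg groups must, wherever it is differentiable, carry the horizontal distribution of the source into that of the target. The strategy is to exploit a topological obstruction arising from this contact constraint, so that no Lipschitz extension can exist regardless of how large the constant $C$ is allowed to be.

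First I would reduce matters to a concrete test configuration. Since $\H^n$ contains isometric copies of $\R^n$ and, more importantly, of spheres sitting in horizontal subspaces, I would fix a suitable sphere $\S^{m}$ (for an appropriate $m$ depending on $n$ and $k$) realized inside $\H^n$, and define $A$ to be this sphere together with its "center." On $A$ I would prescribe a Lipschitz map into $\H^{2k+1}$ whose restriction to the sphere represents a nontrivial homotopy class, chosen so that it cannot be filled. The key point is that a Lipschitz extension $F \colon \H^n \to \H^{2k+1}$ restricted to a ball bounded by $\S^m$ would, via the contact condition and Pansu differentiability, have to factor through a space of Hausdorff dimension too small to support the required filling — producing a contradiction with the nontriviality of the prescribed boundary map.

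The crucial ingredient is a dimension/degree count. Here I would invoke that a Lipschitz map $\H^n \to \H^{2k+1}$ is Pansu differentiable almost everywhere, and that its Pansu differential is a horizontal (contact) homomorphism whose rank on the horizontal layer is at most $2(2k+1) = 4k+2$. When $n \geq 2k+1$, the source horizontal dimension $2n$ exceeds the target horizontal dimension, so the differential is degenerate almost everywhere. The heart of the argument is to convert this pointwise rank deficiency into a global statement: that the image of the extension, or its effect on a suitable homology/homotopy class of $\S^m$, must be trivial, contradicting the choice of $f$ on the boundary. This is most naturally phrased through a coarea- or degree-type argument, or through the vanishing of a pulled-back cohomology class (e.g.\ the class detected by the contact form $\alpha$ on $\H^{2k+1}$, whose pullback under a contact map is constrained).

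The step I expect to be the main obstacle is making the topological obstruction rigorous for \emph{merely Lipschitz} (hence only almost-everywhere differentiable) maps, rather than smooth ones. For smooth contact maps the rank deficiency immediately kills the relevant degree, but a Lipschitz extension need not be smooth, and one must rule out that the map concentrates its "twisting" on a measure-zero set. I would handle this by approximating or mollifying, or better, by working with the pullback of the contact form as an $L^\infty$ differential form and showing that its exterior derivative (which controls the symplectic volume on the horizontal layer) forces the relevant integral to vanish; the almost-everywhere contact condition then guarantees the needed degeneracy in the $L^\infty$ sense. Establishing this $L^\infty$/measure-theoretic version of the contact obstruction, and pairing it correctly against the chosen homology class, is where the real work lies.
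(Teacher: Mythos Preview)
Your proposal has a genuine gap at the heart of the argument. You write that the Pansu differential of a Lipschitz map $\H^n \to \H^{2k+1}$ has horizontal rank at most $4k+2$, and that ``when $n \geq 2k+1$, the source horizontal dimension $2n$ exceeds the target horizontal dimension, so the differential is degenerate almost everywhere.'' But in the critical case $n = 2k+1$ the two horizontal dimensions are \emph{equal} ($2n = 4k+2$), so the differential need not be degenerate at all --- indeed the identity map is Lipschitz with everywhere injective Pansu differential. Your rank-deficiency mechanism therefore gives no obstruction precisely in the case that matters. The case $n > 2k+1$ reduces to $n = 2k+1$ by restriction to an isometrically embedded copy of $\H^{2k+1}$ (as the paper notes immediately after the statement), and for $n > m$ the non-extension was already known from the Balogh--F\"assler result cited in the introduction. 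The new content of the theorem is exactly the equidimensional case, where a dimension count cannot work.

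The paper's mechanism is different in kind: it is an \emph{orientation} obstruction, not a dimension obstruction. A contact diffeomorphism of $\H^{2k+1}$ satisfies $f^\ast\theta = \lambda\theta$, hence $f^\ast(\theta\wedge(d\theta)^{2k+1}) = \lambda^{2k+2}\,\theta\wedge(d\theta)^{2k+1}$, so its Jacobian is a perfect even power and thus nonnegative. Proposition~\ref{degree} promotes this to the statement that any Lipschitz map $\H^{2k+1}\to\H^{2k+1}$ has nonnegative mapping degree (off the singular image), using Pansu differentiability and a local homotopy to the $P$-differential at each regular preimage point. The contradiction is then obtained not from a homotopically nontrivial sphere in a horizontal subspace, but from a specific Lipschitz self-map of a geodesic $2n$-sphere $H \subset \H^{2k+1}$ built from the symplectic reflection $(a,b)\mapsto(-a,b)$; any Lipschitz extension across the enclosed domain would have degree $(-1)^{2k+1} = -1$, contradicting the nonnegativity. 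Note in particular that this argument genuinely uses the parity of the dimension --- it says nothing about $(\H^{2k},\H^{2k})$, which remains open --- whereas your rank argument, were it valid, would not distinguish odd from even.
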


Since $\H^{2k+1}$ is isometrically embedded in $\H^n$ for $2k+1 \leq n$ it is enough to prove the statement for $n=2k+1$. The idea of the proof is to exploit the consequences of the aforementioned contact condition. Indeed, this condition implies that a globally defined smooth contact map $F \colon \H^{2k+1} \to \H^{2k+1}$ is orientation preserving, which gives a major restriction on the nature of globally defined Lipschitz mappings. It allows us to define a map $f \colon H \to \H^{2k+1}$ from a subset $H \subset \H^{2k+1}$ into $\H^{2k+1}$ that does not have a globally defined Lipschitz extension. 
In order to prove the non-existence of such an extension we use results from mapping degree theory \cite{Lloyd}, \cite{Fonseca-Gangbo} and Pansu's result on almost everywhere differentiability \cite{Pansu}. 
This argument does not work for the case of the pairs $(\H^{2k}, \H^{2k})$ as smooth contact maps $F \colon \H^{2k} \to \H^{2k}$ are not necessarily orientation preserving. In this case the Lipschitz extension property is still open. 

\medskip
The paper is organized as follows: In Section 2 we recall the metric structure of the Heisenberg group and discuss regularity results of Lipschitz maps between Heisenberg groups to be used in the sequel. In Section 3 we prove a result on the computation of the mapping degree of maps between Heisenberg groups using the Pansu derivative. This result may be of independent interest. In Section 4 we give the proof of Theorem~\ref{main}. 

\medskip
{\bf Acknowledgements:} The authors thank Jeremy T.~Tyson, Stefan Wenger, Piotr Hajlasz and Enrico Le Donne for many stimulating conversations on the subject of the paper.

\section{The Heisenberg metric and regularity of Lipschitz maps between Heisenberg groups}

We recall background results on the geometry of the Heisenberg group and preliminary results on the differentiability properties of Lipschitz maps between Heisenberg groups. 

The model of the Heisenberg group $\H^n$ used in this paper is described as follows: $\H^n := \R^n\times \R^n \times \R$, endowed with the group operation
$$(x,y,t) \ast (x',y',t') = (x+x', y+y', t+t' +2(y\cdot x'-x\cdot y')),$$
for $(x,y,t), (x',y',t') \in \H^n$. Here, $x\cdot y$ stands for the usual scalar product of vectors $x,y\in \R^n$.

It is easy to check that the vector fields $X_i= \frac{\partial}{\partial x_i} + 2 y_i \frac{\partial}{\partial t} , Y_i= \frac{\partial}{\partial y_i} - 2 x_i \frac{\partial}{\partial t} $ are left invariant for $i = 1,\ldots,n$. For $p\in \H^n$ the space $H_p = span \{ X_i,Y_i : i = 1,\ldots,n \} $ is the horizontal plane at $p$. Varying $p\in \H^n$ gives rise to a vector bundle $\Ha$, 
the horizontal plane distribution. 

Let us choose the Riemannian metric on $\Ha$ which makes $\{ X_i,Y_i: i = 1,\ldots,n \} $ orthonormal at each point $p\in \H^n$ and denote the length of horizontal vectors $v\in H_p$ in this metric by $|v|$. 

A Lipschitz curve $\gamma \colon [0,1] \to \H^n$ is said to be horizontal if $\gamma'(t) \in H_{\gamma(t)}$ for a.e.~$t\in [0,1]$. For $p,q \in \H^n$ we denote by $d_{cc}(p,q)$ the sub-Riemannian Carnot-Carath\'eodory distance between $p$ and $q$ defined as
$$d_{cc}(p,q) := \inf \int_0^1|\gamma'(t)| dt,$$
where the $\inf$ is taken over all horizontal curves connecting $p$ and $q$. Such horizontal curves exist between any pair of points $p,q \in \H^n$. This follows from the fact that the horizontal distribution is bracket-generating. For an explicit computation of the geodesics and the exact formulae of the metric we refer to~\cite{Monti} (see also~\cite{CDPT} or~\cite{Ambrosio-Rigot}). 

It is easy to check that $d_{cc}$ is invariant under left translation. Moreover, it is compatible with the homogeneous dilations $\delta_s \colon \H^n \to \H^n$ defined by $\delta_s(x,y,t) = (sx,sy,s^2t)$, i.e.~$d_{cc}(\delta_s(p), \delta_s(q)) = s d_{cc}(p,q)$ for all $s > 0$ and $p,q \in \H^n$. 

In this paper the metric on $\H^n$ is understood to be the above defined Carnot-Carath\'eodory metric $d_{cc}$. However the main result of the paper remains valid for any metric $d$ on $\H^n$ that is bi-Lipschitz equivalent to $d_{cc}$. 


\medskip 
It is easy to see that a smooth Lipschitz map $f \colon \H^n \to \H^{n}$ has the property that its tangent map preserves the horizontal bundle $\Ha$. Since the horizontal bundle is the kernel of the one-form
$$\theta = dt-2 \sum_{i=1}^{2k+1}(y_{i}dx_{i}-x_{i}dy_{i}),$$
we obtain that there exists a real valued smooth function $\lambda$ such that $f^{\ast}\theta = \lambda \cdot \theta$.

If $f$ is merely Lipschitz with no additional smoothness assumptions, then $\lambda$ is only an $L^{\infty}$ function and the equality $f^{\ast}\theta = \lambda \cdot \theta$ holds almost everywhere. This follows from the following differentiability result: 

A Lipschitz map $f \colon \H^{n}\to \H^{n}$ is called $P$-differentiable at a point $p\in \H^{n}$ if there exists a group homomorphism $P(f)(p) \colon \H^{n} \to \H^{n}$ such that 
\begin{equation} \label{p-diff}
\lim_{s \to 0} \delta_{1/s}\left( (f(p))^{-1}\ast f(p\ast \delta_{s}q) \right) = P(f)(p)(q)
\end{equation}
for all $q\in \H^{n}$. Here $\ast$ stands for the group operation, $\delta_{s}$ is the homogeneous group dilation, and $p^{-1}$ is the inverse to $p$ in $\H^{n}$. Pansu proved in \cite{Pansu} that any Lipschitz map is $P$-differentiable almost everywhere, moreover in a point of $P$-differentiability the limit \eqref{p-diff} is uniform in $q$ if $q$ is taking its values in a compact set. 

Let us define the singular set of a Lipschitz map $f \colon \H^{n} \to \H^{n}$ as 
$$ S = S_{1}\cup S_{2},$$
$$ S_{1} := \{ p \in \H^{n}: \text{$f$ is not $P$-differentiable at $p$} \},$$
$$ S_{2} := \{ p \in \H^{n}: \text{$f$ is $P$-differentiable at $x$ but $P(f)(p)$ is not injective} \},$$
and the regular set as 
$$R = \H^{n}  \setminus S.$$ 

For $d>0$ we denote by $\Ha^{d}$ the $d$-dimensional Hausdorff measure with respect to the Heisenberg metric. By Pansu's theorem we have $\Ha^{2n+2}(S_{1}) = 0$, which implies by the Lipschitz continuity of $f$ that also $\Ha^{2n+2}(f(S_{1})) = 0$. Furthermore, we observe that if $p\in S_{2}$, the metric Jacobian $J_{f}(p)$ of $f$ at $p$ vanishes, where 
$$J_{f}(p) := \liminf_{r\to 0+}\frac{\Ha^{2n+2}(f(B(p,r)))}{\Ha^{2n+2}(B(p,r))} = 0$$
and $B(p,r)$ is the (open) ball with center $p$ and radius $r$ in the Heisenberg metric. For any measurable set $A\subset \H^{n}$ we have 
$$\Ha^{2n+2}(f(A)) \leq \int_{A}J_{f}(p) \,d\Ha^{2n+2}(p)$$
(see e.g.~Chapter~6 of~\cite{CDPT}), so $\Ha^{2n+2}(f(S_{2})) = 0$ and
$$\Ha^{2n+2}(f(S)) = 0.$$

\section{The mapping degree of Lipschitz maps in Heisenberg groups}

In this section we will use the $P$-derivative in order to estimate the mapping degree of Lipschitz maps between Heisenberg groups. For background results on degree theory we refer to \cite{Lloyd}, \cite{Fonseca-Gangbo}.

\medskip 
The considerations of this section are based on the fact that smooth contact maps  $f \colon \H^{2k+1}\to \H^{2k+1}$ have a positive Jacobi determinant, and so they preserve orientation. To see this, let 
$$\theta = dt-2 \sum_{i=1}^{2k+1}(y_{i}dx_{i}-x_{i}dy_{i})$$
be the contact form in $\H^{2k+1}$. Observe that if we denote by $v$ the volume form in $\H^{2k+1}$, then there exists a dimensional constant $c(k)>0$ such that $v = c(k)\cdot \theta \wedge (d\theta)^{2k+1}$.
If $f$ is a smooth contact mapping then $f^{\ast}\theta = \lambda \theta$ for some real valued smooth function $\lambda$. The relation between the Jacobi determinant of $f$ and  $\lambda$  can be computed as follows:
\begin{align*}
f^{\ast}v &= c(k)\cdot f^{\ast}\theta  \wedge (f^{\ast}d\theta)^{2k+1}= c(k) \cdot \lambda \theta \wedge (d (\lambda \theta ))^{2k+1} \\
&= c(k) \cdot \lambda \theta \wedge (d\lambda \wedge \theta + \lambda \,d\theta)^{2k+1}= 
c(k) \cdot \lambda^{2k+2} \theta \wedge (d \theta)^{2k+1} = \lambda^{2k+2} v,
\end{align*}
which implies that $\det Df = \lambda^{2k+2} \geq 0$. In particular, it follows that group homomorphisms $f \colon \H^{2k+1} \to \H^{2k+1}$ are orientation preserving. 

\medskip
We want to extend this observation from the class of smooth Lipschitz maps to possibly non-smooth Lipschitz mappings. This is possible by a version of the Rademacher differentiability theorem due to Pansu \cite{Pansu} according to which Lipschitz maps between Carnot groups are differentiable almost everywhere; and their tangent map satisfies the contact condition in the points of differentiability. Note that differentiability is understood here in terms of the underlying group operation and compatible homogeneous dilations as indicated in the previous section. The classical Rademacher differentiability theorem that is formulated in terms of vector addition and scalar multiplication as in the Euclidean space  fails for Lipschitz maps between Carnot groups. An example showing this fact is due to Magnani \cite{Magnani}.  Nevertheless, Pansu's differentiability theorem can be used to estimate the mapping degree for maps between Carnot groups.
The main result of this section is the following statement:

\begin{proposition} \label{degree} 
Let $k\geq 0$ be an integer, $f \colon \H^{2k+1}\to \H^{2k+1}$ a Lipschitz map, $U\subset \H^{2k+1}$ an open set, and $p\in \H^{2k+1}\setminus f(\partial U)$. If $p\notin f(S)$, then 
$$\deg (f,U,p) \geq 0.$$
\end{proposition}

\begin{proof} 
Let us consider a (possibly) non-smooth Lipschitz map $f \colon \H^{2k+1}\to \H^{2k+1}$ as in the statement. There are two cases to consider: If $p\notin f(U)$, then $\deg (f, U, p)= 0 $ by the definition of the degree (\cite{Lloyd}, \cite{Fonseca-Gangbo}) and we are done. Let us therefore assume that $p\in f(U) \setminus [f(\partial U) \cup f(S)]$. In this case we shall prove that 
$$ \deg (f, U, p) > 0.$$
The proof is based on the fact that for group homomorphisms this property holds true as indicated above. This fact will be combined with a localization argument using linearization by the $P$-differential of $f$ at regular points. 

The first step is to show that if $p\in f(U) \setminus [f(\partial U) \cup f(S)]$, the pre-image 
$$f^{-1}(p)\subset R \cap U$$
is a finite set. 

To prove this, assume by contradiction that there exists an infinite sequence $(p_{n})_{n}$ of distinct points in $R\cap U$ such that $f(p_{n}) = p$ for each $n\in \N$. By passing to a subsequence we can assume that $p_{n}\to p_{0}$ for some $p_{0} \in \overline{U}$. By continuity of $f$ we obtain that $f(p_{0})= p$. Since $p\notin f(\partial U) \cup f(S)$, this implies that $p_{0}\in R\cap U$. We shall obtain the desired contradiction by showing that $P(f)(p_{0})$ is non-injective. 

To see this, let us write $p_{n}= p_{0}\ast\delta_{t_{n}}q_{n}$ for some $q_{n} \in \partial B(0,1)$ and $t_{n} > 0$, $t_{n}\to 0$. By passing to a further subsequence if necessary we can assume that $q_{n}\to q$ for some $q\in \partial B(0,1)$. Since $f(p_{0}) = p = f(p_{n}) = f(p_{0}\ast\delta_{t_{n}}q_{n})$, we have
\begin{align*}
P(f)(p_{0})(q) 
&= \lim_{t_{n} \to 0} \delta_{1/t_{n}} 
\left( (f(p_{0}))^{-1} \ast f(p_{0} \ast \delta_{t_{n}}q) \right) \\
&= \lim_{t_{n} \to 0} \delta_{1/t_{n}} 
\left( (f(p_{0}\ast\delta_{t_{n}}q_{n}))^{-1} \ast f(p_{0} \ast \delta_{t_{n}}q) \right),
\end{align*}
but this last limit equals $0$ by the Lipschitz continuity of $f$ and the fact that $q_{n}\to q$. So $P(f)(p_{0})(q) = 0$, in contradiction to $p_0 \in R$.   

By the first step we can assume that 
$$ f^{-1} (p) = \{ p_{1}, \ldots , p_{n} \} \subset R \cap U .$$
By general properties of the mapping degree (\cite{Lloyd}, \cite{Fonseca-Gangbo}) we have that 
$$ \deg (f, U, p) = \sum_{i=1}^{n} \deg (f, U_{i}, p) ,$$
where $U_{i}$ are arbitrary small, non-intersecting open sets containing $p_{i}$ for $i= 1,\ldots, n$.

In the following we shall prove that $\deg(f,U_{i},p)=1$ for every $i$. The idea is to construct a homotopy from $f$ to its linearization $P(f)(p_{i})$ in $\overline{U_{i}}$ with the property that the image of $\partial U_{i}$ under the homotopy does not meet the point $p$.

Let us fix an index $i\in \{1, \ldots , n\}$. To simplify the notation, we can assume without loss of generality that $p=p_{i}=0$, $U_{i}= B(0,r)$ for some small $r>0$ (to be chosen later), and $P(f)(p_{i}) = P(f)(0)= A$, where $A \colon \H^{2k+1} \to \H^{2k+1}$ is an injective homogeneous group homomorphism (i.e.~commuting with dilations). 

The $P$-differentiability of $f$ at $0$ now reads as
$$ \lim_{s\to 0}\delta_{1/s}f(\delta_{s}q)= A(q),$$
and the convergence is uniform in $q\in \partial B(0,1)$. Equivalently, for every $\epsilon > 0$ there exists an $\eta > 0$ such that 
\begin{equation} \label{eq:eps-s}
d_{H}(f(\delta_sq),A(\delta_sq)) < \epsilon \cdot s
\end{equation}
for all $q \in \partial B(0,1)$ and $s < \eta$. Furthermore, by the injectivity and homogeneity  of $A$ there exists an $a > 0$ such that
\begin{equation} \label{eq:a-s}
d_{H}(A(\delta_sq),0) \geq a \cdot s
\end{equation}
for all $q\in \partial B(0,1)$ and $s > 0$. Let us choose $\epsilon:= a$ and $U_i = B(0,r)$ for some $r < \eta$.

Consider the homotopy $H \colon [0,1] \times \overline{B(0,r)} \to \H^{2k+1}$ given by 
$$H(s,q)= A(q)\ast \delta_{s}[(A(q))^{-1}\ast f(q)].$$
Clearly $H(0,\cdot)=A$ and $H(1,\cdot)=f$. For all $s \in [0,1]$ and $q\in \partial B(0,r)$ we have 
$$d_H(H(s,q),0) \ge d_H(A(q),0) - d_H(H(s,q),A(q)),$$
where $d_H(A(q),0) \ge a \cdot r$ by~\eqref{eq:a-s} and $d_H(H(s,q),A(q)) = s\,d_H(f(q),A(q)) < \epsilon \cdot r$ by~\eqref{eq:eps-s}, so that 
$$d_H(H(s,q),0) > 0.$$
By the homotopy invariance property of the mapping degree (\cite{Lloyd}, \cite{Fonseca-Gangbo}) we obtain that 
$$\deg(f,U_i,p) = \deg(f, B(0,r), 0) = \deg (A, B(0,r), 0) = 1,$$
finishing the proof. 
\end{proof} 

\begin{remark}
Observe that the statement of Proposition \ref{degree} no longer holds for maps $f \colon \H^{2k}\to \H^{2k}$. Indeed, the mapping $f \colon \H^{2k}\to \H^{2k}$ given by $f(x,y,t) = (-x,y,-t) $ is a contact map since $f^{\ast}\theta = - \theta$, and it is orientation reversing: $det D f \equiv -1$. More generally, it is easy to see that every integer is the degree of some contact map between domains in $\H^{2k}$.
\end{remark}

\section{A partially defined Lipschitz map in the Heisenberg group}

In this section we define a Lipschitz mapping on a subset $H$ of $\H^{n}$ to itself which does not have a Lipschitz extension to $\H^{n}$ in the case when $n=2k+1$. 

The set $H$ will be a topological $2n$-sphere foliated by geodesics connecting the origin $p_{0}$ to a point $p_{1}$ on the vertical axis. For simplicity of notation we choose $p_{1}=(0,0,4\pi) \in \R^{n}\times\R^{n}\times \R$. To define $H$ we use a parametrization $\Phi \colon \S^{2n-1}\times [0,2\pi] \to \H^{n}$, where
$$\S^{2n-1}:= \{ (a,b)\in \R^{n}\times \R^{n} : |a|^{2}+|b|^{2}=1 \}$$
is the unit sphere in $\R^{2n}$. The map $\Phi \colon \S^{2n-1}\times [0,2\pi] \to \H^{n}$ will be given by 
$$ \Phi((a,b),s) := \bigl( \sin(s)\cdot a + (1-\cos(s))\cdot b,\, 
-(1-\cos(s))\cdot a + \sin(s)\cdot b,\, 
2(s-\sin(s)) \bigr)$$ 
for $((a,b),s) \in \S^{2n-1}\times [0,2\pi]$. The hypersurface $H := \Phi(\S^{2n-1}\times [0,2\pi])$ is the boundary of a bounded domain $\Omega \subset \H^{n}$ and is foliated by the unit speed geodesics $s\to \Phi((a,b),s)$ connecting $p_{0}$ and $p_{1}$. For details we refer to the papers of Ambrosio-Rigot~\cite{Ambrosio-Rigot} and Monti~\cite{Monti}.
 
We claim that to each point $p\in H\setminus \{p_{0}, p_{1} \}$ there is a unique value $((a,b),s) \in \S^{2n-1}\times(0,2\pi)$ such that $\Phi((a,b),s)=p$, whereas $\Phi^{-1}(p_{0})= \S^{2n-1}\times\{0 \}$ and $\Phi^{-1}(p_{1})= \S^{2n-1}\times\{2\pi\}$. To verify this, let $(x,y,z) \in H\setminus \{ p_{0},p_{1}\}$. Then $z\in (0,4\pi)$, and since the function $\phi \colon (0,2\pi) \to (0,4\pi)$, $\phi(s)= 2(s-\sin(s))$ is strictly increasing and onto, we get a unique value $s\in (0,2\pi)$ such that $\phi(s) = z$. Furthermore, the system 
\begin{equation} \label{ab}
 \left \{
\begin{array}{rrcl}
\sin(s) \cdot a & + \,(1-\cos(s)) \cdot b & = & x \\
-(1-\cos(s)) \cdot a & + \,\sin(s) \cdot b & = & y   
\end{array}
\right.
\end{equation}
has a unique solution $(a,b)$, proving the claim.

Consider now the {\it symplectic reflection} $R \colon \R^{2n} \to \R^{2n}$ given by $R(a,b)=(-a,b)$. Observe that $R^{\ast}\omega = -\omega$ for the standard symplectic form $\omega = \sum_{i=1}^{n} dx_{i} \wedge dy_{i}$. We define the mapping $f \colon H \to H$ using $R$ as follows: If $p\in \{p_{0}, p_{1}\}$, set $f(p) = p$. If $p\in H\setminus \{ p_{0}, p_{1}\}$, let $((a,b),s) = \Phi^{-1}(p)$ and put
$$f(p) = \Phi(R(a,b),s).$$
Observe that the mapping interchanges the geodesics according to the reflection $(a,b)\mapsto R(a,b)$ while it preserves the {\it height} $s$ of points on $H$. 
 
\begin{lemma} \label{example} 
The mapping $f \colon H \to H$ defined above is a Lipschitz map with respect to the restriction of the sub-Riemannian Carnot-Carath\'eodory metric $d_{cc}$ on $H$ in both the source and target space. 
\end{lemma}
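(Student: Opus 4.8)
The plan is to work with the Korányi gauge $N(z,\tau)=(|z|^{4}+\tau^{2})^{1/4}$ and the associated left-invariant homogeneous distance $d(P,Q)=N(P^{-1}\ast Q)$, which is bi-Lipschitz equivalent to $d_{cc}$; since being Lipschitz is preserved under a bi-Lipschitz change of metric, it suffices to bound $d(f(P),f(Q))$ by a constant times $d(P,Q)$. First I would pass to complex coordinates $z=x+iy\in\C^{n}$, in which the group law reads $P^{-1}\ast Q=(w-z,\ \upsilon-\tau+2\,\mathrm{Im}\sum_{i}\bar z_{i}w_{i})$ for $P=(z,\tau)$, $Q=(w,\upsilon)$. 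Writing a direction $(a,b)\in\S^{2n-1}$ as $\zeta=a+ib$ with $|\zeta|=1$, a direct computation gives $\Phi(\zeta,s)=(\lambda_{s}\zeta,\,2(s-\sin s))$ with $\lambda_{s}=2\sin(s/2)\,e^{-is/2}$, and the reflection $R(a,b)=(-a,b)$ becomes $\zeta\mapsto-\bar\zeta$, so that $f$ sends $(\lambda_{s}\zeta,\,2(s-\sin s))$ to $(-\lambda_{s}\bar\zeta,\,2(s-\sin s))$, preserving the height $s$.

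Next I would compute both gauge distances for two points $P=\Phi(\zeta,s)$ and $Q=\Phi(\eta,\sigma)$. Setting $\beta=\sum_{i}\zeta_{i}\bar\eta_{i}$ (so $|\beta|\le1$), $A=|\lambda_{s}|^{2}+|\lambda_{\sigma}|^{2}$, $K=2|\lambda_{s}||\lambda_{\sigma}|$, $B=2(\sigma-\sin\sigma)-2(s-\sin s)$ and $\psi=(\sigma-s)/2$, the horizontal and vertical parts organize into
$$d(P,Q)^{4}=|v-u|^{2},\qquad d(f(P),f(Q))^{4}=|v-u'|^{2},$$
where $v=(A,B)\in\R^{2}$, and $u,u'\in\R^{2}$ are the points corresponding to the complex numbers $Ke^{i\psi}\beta$ and $Ke^{i\psi}\bar\beta$. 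The essential point is that passing from $\beta$ to $\bar\beta$ is exactly the reflection $\mathrm{Ref}_{\psi}$ across the line $\ell_{\psi}$ through the origin at angle $\psi$, so that $u'=\mathrm{Ref}_{\psi}(u)$: the map $f$ leaves $v$ and the modulus $|u|$ untouched and only reflects $u$ across $\ell_{\psi}$.

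Then I would estimate, using that $\mathrm{Ref}_{\psi}$ is an isometry fixing $\ell_{\psi}$ and that $x\mapsto\dist(x,\ell_{\psi})$ is $1$-Lipschitz,
$$d(f(P),f(Q))^{2}=|v-u'|\le|v-u|+2\,\dist(u,\ell_{\psi})\le 3|v-u|+2\,\dist(v,\ell_{\psi}).$$
Here $\dist(v,\ell_{\psi})=|B\cos\psi-A\sin\psi|=:|E_{0}(s,\sigma)|$ depends only on $s$ and $\sigma$. Since $A\ge K$ by the arithmetic–geometric mean inequality, one has $|v|\ge A\ge K$, and as $\zeta,\eta$ vary $u$ fills the disk (for $n\ge2$) or the circle (for $n=1$) of radius $K$, so that the smallest possible value of $d(P,Q)^{2}=|v-u|$ equals $|v|-K$. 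Thus the whole statement reduces to the scalar inequality
$$|E_{0}(s,\sigma)|\le C\bigl(\sqrt{A^{2}+B^{2}}-K\bigr)\qquad\text{for all }s,\sigma\in[0,2\pi],$$
after which $d(f(P),f(Q))^{2}\le(3+2C)\,d(P,Q)^{2}$ finishes the argument.

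I expect this last inequality to be the main obstacle, since it must hold with a single constant $C$ over the whole parameter square, including the two degenerate regimes. Along the diagonal, writing $\sigma=s+2h$, a Taylor expansion gives $\sqrt{A^{2}+B^{2}}-K\sim 4h^{2}$ while the $h$- and $h^{2}$-terms of $E_{0}$ cancel, leaving $E_{0}=O(h^{3})$, so the ratio tends to $0$; near the poles $s,\sigma\to0$ (and symmetrically $\to2\pi$), where $\Phi$ collapses whole spheres to points and any naive bi-Lipschitz control of $\Phi$ breaks down, one finds $\sqrt{A^{2}+B^{2}}-K\sim(\sigma-s)^{2}$ and $E_{0}\sim-(\sigma-s)^{3}/6$, so the ratio again stays bounded. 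Off the diagonal and away from the poles $\sqrt{A^{2}+B^{2}}-K$ is bounded below while $E_{0}$ is bounded, so a continuity and compactness argument on the closed square produces the uniform $C$. Carrying out these expansions carefully, and checking that the forced case $|\beta|=1$ when $n=1$ is covered by the same estimate, is the technical heart of the proof.
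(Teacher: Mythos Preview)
Your approach is correct in outline but takes a genuinely different route from the paper. The paper does not attempt to control $d(f(P),f(Q))$ directly for arbitrary pairs. Instead, given $p=\Phi((a,b),s)$ and $p'=\Phi((a',b'),s')$, it introduces the auxiliary point $q=\Phi((a',b'),s)$ on the same geodesic as $p'$ but at the height of $p$. Since $f$ sends geodesics through $p_0$ to geodesics through $p_0$ preserving arclength, one has $d_{cc}(f(q),f(p'))=|s-s'|\le d_{cc}(p,p')$. For the remaining term $d_{cc}(f(p),f(q))$, where both points now have the \emph{same} height $s$, a short direct computation with the Kor\'anyi metric gives the exact identity $d_K(f(p),f(q))=d_K(p,q)$; in your notation this is simply the case $\psi=0$, where $\ell_\psi$ is the $A$-axis, $v=(A,0)$ lies on it, and the reflection $u\mapsto u'$ fixes $|v-u|$. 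The Lipschitz bound then follows from the triangle inequality and the bi-Lipschitz equivalence of $d_K$ and $d_{cc}$, with no asymptotic analysis at all.

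What your approach buys is a single unified computation; what it costs is the scalar inequality $|E_0(s,\sigma)|\le C(\sqrt{A^2+B^2}-K)$, which you correctly identify as the crux and only sketch via Taylor expansions and a compactness argument. Those expansions (the $O(h^3)$ vanishing of $E_0$ along the diagonal and the $O((\sigma-s)^3)$ behaviour near the poles) do check out, but turning the sketch into a uniform bound over the full square --- in particular uniformly as one approaches the corners along or near the diagonal --- requires some care. The paper sidesteps this entirely: the equal-height reduction replaces your approximate inequality by an exact isometry, so the delicate two-parameter analysis never arises.
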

 
\begin{proof}
We have to find a constant $C>0$ such that 
$$ d_{cc}(f(p),f(p')) \leq C\, d_{cc}(p,p') \quad \text{for all $p,p' \in H$.}$$
Let $p,p' \in H$. We can assume that $p,p' \in H \setminus \{p_0,p_1\}$, so that there are unique preimages $((a,b),s) = \Phi^{-1}(p)$ and $((a',b'),s') = \Phi^{-1}(p')$. Consider a third point $q\in H$ lying on the same geodesic as $p'$ but having the height of $p$, that is, $q = \Phi((a',b'),s)$. By the triangle inequality we have 
$$d_{cc}(f(p), f(p')) \leq d_{cc}(f(p), f(q)) + d_{cc}(f(q),f(p')).$$
Since $q$ and $p'$ are on the same geodesic, the same is true for 
$f(q)$ and $f(p')$. It follows that 
$$d_{cc}(f(q),f(p')) = |s - s'| = |d_{cc}(p_0,p) - d_{cc}(p_0,p')| \le d_{cc}(p,p').$$
Hence we obtain
\begin{equation} \label{first-estimate}
d_{cc}(f(p),f(p')) \leq d_{cc}(f(p),f(q)) + d_{cc}(p,p').
\end{equation}  

We claim that there is a constant $C'>0$ such that 
\begin{equation} \label{same-heights}
d_{cc}(f(p),f(q)) \leq C' d_{cc}(p,q).
\end{equation} 
Here we will use in an essential way that $p$ and $q$ have the same heights. In order to verify the above estimate we shall consider another metric $d_{K}$ that is bi-Lipschitz equivalent to $d_{cc}$ and prove that 
\begin{equation} \label{koranyi}
d_{K}(f(p),f(q)) = d_{K}(p,q).
\end{equation} 
The  estimate~\eqref{same-heights} follows from~\eqref{koranyi} and the bi-Lipschitz equivalence of $d_{cc}$ and $d_{K}$. 

For two points $(x,y,t), (x',y',t') \in \H^{n}$ the Kor\'anyi gauge metric $d_{K}$ is given by the expression
$$d_{K}((x,y,t),(x',y',t')) = \bigl( ( |x-x'|^{2} + |y-y'|^{2})^{2}
+ |t-t' + 2(x\cdot y' - y \cdot x')|^{2} \bigr)^{1/4},$$
where $| \cdot |$ and $\cdot$ denote the Euclidean norm and scalar product in $\R^{n}$. It can be checked that this defines a metric indeed. This metric is no longer given as a sub-Riemannian metric but is bi-Lipschitz equivalent to $d_{cc}$. We refer to \cite{CDPT} for details. 

To verify equality~\eqref{koranyi} we shall use the explicit forms of $p$ and $q$:
\begin{align*}
p &= \bigl( \sin s \cdot a + (1-\cos s) \cdot b,\, 
-(1-\cos s)\cdot a + \sin s \cdot b,\, 2 (s-\sin s) \bigr), \\
q &= \bigl( \sin s \cdot a' + (1-\cos s) \cdot b',\, 
-(1-\cos s)\cdot a' + \sin s \cdot b',\, 2 (s-\sin s) \bigr),
\end{align*}
where $s\in (0,2\pi)$ and $(a,b),(a',b')\in \S^{2n-1}$. The respective expressions for $f(p)$ and $f(q)$ are obtained by replacing $a$ by $-a$ as well as $a'$ by $-a'$ and leaving $b$ and $b'$ unchanged. Now a direct computation gives
\begin{equation} \label{computation}
d_{K}(p,q) = (2 - 2 \cos s)^{1/2} \bigl( (|a-a'|^{2}+ |b-b'|^{2})^{2} 
+ 4|a\cdot b' - b\cdot a'|^{2} \bigr)^{1/4}.
\end{equation}
To calculate $d_{K}(f(p),f(q))$, all we need to do is to replace on the right side of~\eqref{computation} $a$ by $-a$ and $a'$ by $-a'$. After doing that, we see that its value will not change. This gives~\eqref{koranyi} and hence~\eqref{same-heights}. Notice that in this computation it was crucial that the two points $p$ and $q$ have the same heights. Otherwise, the second part of the right side of~\eqref{computation} will be different. 

Combining~\eqref{first-estimate} and~\eqref{same-heights} we obtain
$$d_{cc}(f(p), f(p')) \leq C'd_{cc}(p,q) + d_{cc}(p,p').$$
As $d_{cc}(p,q) \leq d_{cc}(p,p') + d_{cc}(p',q) \leq 2d_{cc}(p,p')$,
this gives
$$ d_{cc}(f(p),f(p')) \leq (2C'+1) \,d_{cc}(p,p'),$$
concluding the proof of the Lemma.
\end{proof}
    
\medskip 
We are now ready to prove our main result about the existence of partially defined Lipschitz maps on $\H^{2k+1}$ with no Lipschitz extension to the whole Heisenberg group $\H^{2k+1}$. This follows from the following:
  
\begin{theorem} \label{non-extension} If $n=2k+1$, then the mapping $f \colon H \to H$ as in Lemma \ref{example} has no Lipschitz extension to the whole Heisenberg group, i.e.~there is no Lipschitz map $F \colon \H^{n}\to \H^{n}$ such that $F|_{H} = f$. 
\end{theorem}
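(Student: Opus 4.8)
The plan is to argue by contradiction: suppose $F \colon \H^n \to \H^n$ is a Lipschitz extension of $f$ with $n = 2k+1$. The strategy is to show that $F$ must have both positive and negative contributions to the mapping degree at a suitable target point, contradicting Proposition~\ref{degree}. The geometric heart of the matter is that $f$ restricted to the sphere $H$ is, in an appropriate sense, orientation \emph{reversing}: the symplectic reflection $R(a,b) = (-a,b)$ satisfies $R^\ast \omega = -\omega$, and since $H$ is a $2n$-sphere foliated by geodesics from $p_0$ to $p_1$, the self-map $f$ of $H$ should behave like a degree $(-1)$ map on the transverse $\S^{2n-1}$ factor. By contrast, Proposition~\ref{degree} forces any global Lipschitz $F$ to be degree-nonnegative at regular values. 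The contradiction then comes from comparing the topological degree computed from the boundary behaviour of $f = F|_H$ on $\partial \Omega = H$ with the sign constraint on the interior.

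Concretely, I would first fix a target point $p^\ast$ in the interior of the image region (for instance a point on the geodesic through the origin that lies strictly between $p_0$ and $p_1$, perturbed off the vertical axis), chosen so that $p^\ast \notin F(S) \cup F(\partial \Omega)$; this is possible because $\Ha^{2n+2}(F(S)) = 0$ as established in Section~2, while $F(\partial \Omega) = f(H) = H$ is a lower-dimensional set, so almost every nearby point avoids both. Next I would compute $\deg(F, \Omega, p^\ast)$ in two ways. On the one hand, Proposition~\ref{degree} gives $\deg(F, \Omega, p^\ast) \geq 0$. On the other hand, the degree is determined entirely by the restriction $F|_{\partial \Omega} = f|_H$, and I would show this boundary degree is strictly negative by relating it to the degree of the reflection-induced self-map of the parametrizing sphere. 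The point is that $f$ fixes the height coordinate $s$ and acts on the $\S^{2n-1}$ fibers by $R$, so the induced map on the quotient sphere $H/\{p_0,p_1\} \cong \S^{2n}$ inherits the orientation behaviour of $R$ on $\R^{2n}$, which has determinant $(-1)^n = -1$ since $n$ is odd.

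The main obstacle will be making the boundary-degree computation rigorous, since $f$ is only Lipschitz and the degree machinery in the Heisenberg setting (via Pansu derivatives and Proposition~\ref{degree}) is tailored to interior regular values rather than to maps of spheres. I would handle this by exploiting the product structure of the parametrization $\Phi$: away from the poles $p_0, p_1$, the map $f$ splits as the identity on the height parameter $s \in (0,2\pi)$ times the reflection $R$ on the sphere $\S^{2n-1}$, so the topological degree of $f$ as a self-map of the sphere $H$ equals the degree of the identity on $[0,2\pi]$ times the degree of $R$ on $\S^{2n-1}$. Since $R$ is a reflection of $\R^{2n}$ fixing a codimension-$n$ subspace with determinant $(-1)^n$, and $n = 2k+1$ is odd, this degree is $-1$, hence the degree of $f \colon H \to H$ is $-1$. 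Because $F$ extends $f$ to the interior of $\Omega$ with $\partial \Omega = H$, standard degree theory gives $\deg(F, \Omega, p^\ast) = \deg(f|_H) \cdot \sigma$ for the appropriate local orientation sign $\sigma > 0$ at $p^\ast$, yielding $\deg(F, \Omega, p^\ast) < 0$ and contradicting Proposition~\ref{degree}. The care required is precisely in pinning down the sign conventions and ensuring that the chosen $p^\ast$ has the right local index so that the boundary computation transfers to a genuine negative interior degree.
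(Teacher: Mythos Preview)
Your overall strategy is the same as the paper's: assume a Lipschitz extension $F$ exists, apply Proposition~\ref{degree} to get $\deg(F,\Omega,p)\geq 0$, and derive a contradiction by showing the degree is actually $-1$ because the boundary map $f$ is orientation reversing on the $2n$-sphere $H=\partial\Omega$ when $n=2k+1$.

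The substantive difference is in how the degree $-1$ is computed. You argue topologically: $f$ fixes the height and acts by $R$ on the $\S^{2n-1}$ fibers, so $f\colon H\to H$ is the suspension of $R|_{\S^{2n-1}}$, hence has degree $\det R=(-1)^{n}$. The paper instead writes $f$ explicitly in cartesian coordinates as
\[
f(x,y,t)=\bigl(\cos\varphi(t)\,x+\sin\varphi(t)\,y,\ \sin\varphi(t)\,x-\cos\varphi(t)\,y,\ t\bigr),
\]
observes that this formula already defines a continuous extension $\tilde F\colon\overline\Omega\to\overline\Omega$ that is smooth in $\Omega$, and then computes $\deg(F,\Omega,p)=\deg(\tilde F,\Omega,p)=\sign\det D\tilde F(0,0,2\pi)=(-1)^{n}$ directly. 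Your suspension argument is perfectly valid and more conceptual; the paper's explicit extension is more self-contained and sidesteps the ``sign convention'' worries you flag at the end, since one simply reads the sign off a Jacobian.

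Two minor points. First, you do not need to perturb $p^\ast$ off the vertical axis or worry about avoiding $F(S)$: since $\Ha^{2n+2}(F(S))=0$ and the degree is locally constant on $\H^{n}\setminus F(\partial\Omega)$, Proposition~\ref{degree} yields $\deg(F,\Omega,p)\geq 0$ for \emph{every} $p\notin F(\partial\Omega)=H$, so $p=(0,0,2\pi)$ works directly. Second, your formula $\deg(F,\Omega,p^\ast)=\deg(f|_H)\cdot\sigma$ with an unspecified $\sigma>0$ is imprecise; the clean statement is simply that $\deg(F,\Omega,p^\ast)$ depends only on $F|_{\partial\Omega}=f$ and equals the degree of $f$ as a self-map of the oriented sphere $H$, which is $(-1)^{n}$.
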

 
\begin{proof} 
By contradiction assume that there is a Lipschitz extension $F \colon \H^{n} \to \H^{n}$ such that $F|_{H} = f$. Proposition~\ref{degree} implies that if $n=2k+1$, then 
$$ \deg (U, F, p) \geq 0 \ \text{for any} \ p \notin F(\partial U).$$
Let us choose $U = \Omega$ such that $\partial U = H$ and let $p$ be any point in the interior of $U$, for instance, take $p= (0,0,2\pi)$. Since $F$ is a reflection on the boundary we have 
$$ \deg (U, F, p) = -1,$$
which gives a contradiction, proving the statement. 

To check that $\deg(U, F, p) = -1$ we use the fact that $\deg(U, F, p) = \deg(U,\tilde{F},p)$ for any continuous map $\tilde{F} \colon \overline{U} \to \R^{2n+1}$ with the property that $\tilde{F}_{\partial U} = f$. 
 
We will use the relation~\eqref{ab} first to find an explicit formula for the mapping $f \colon H \to H$ in cartesian coordinates as
\begin{equation} \label{cartesian}
f(x,y,t) = 
\left[
\begin{array}{c}
  \cos \varphi(t) \cdot x + \sin \varphi(t) \cdot y   \\
  \sin \varphi(t) \cdot x - \cos \varphi(t) \cdot y   \\
  t 
\end{array}
\right],
\end{equation} 
where $\varphi \colon [0,4\pi] \to [0,2\pi]$ is the inverse function of $\phi \colon [0,2\pi] \to [0,4\pi]$, $\phi(s) = 2 (s-\sin s)$.

Observe now that $\varphi \colon (0,4\pi) \to (0,2\pi)$ is a smooth function and formula \eqref{cartesian} itself in fact gives a continuous extension $\tilde{F} \colon \overline{U} \to \overline{U}$ that is smooth in $U$. The above formula indicates that $\tilde{F}$ preserves horizontal planes. In horizontal planes the map looks like a reflection followed by a rotation where the rotation angle depends on the height of the horizontal plane. 

Since $ \deg (U, F, p)$ is independent on the choice of the extension we have $ \deg (U, F, p)= \deg (U,\tilde{F},p)$. Because the mapping $\tilde{F}$ is smooth in $U$ we can calculate the degree by an explicit computation as 
$$\deg (U, \tilde{F}, p) = \sign \det D \tilde{F} (\tilde{F}^{-1}(p)) =  \sign \det D \tilde{F} (0,0,2\pi)=(-1)^{n}.$$
In the case when $n=2k+1$ we obtain that $\deg (U, \tilde{F}, p)= -1$, finishing the proof. 
\end{proof} 
  
In conclusion, let us come back to the general problem of determining exactly the pairs of Heisenberg groups $(\H^n, \H^m)$ with the Lipschitz extension property. Consider the case $n=1$. In~\cite{Wenger-Young} it is shown that the pair $(\H^1, \H^3)$ does have the Lipschitz extension property. On the other hand, by Theorem~\ref{main} this is not case for $(\H^1, \H^1)$. Therefore it only remains to study the extension problem for the pair $(\H^1, \H^2)$. We believe that understanding the Lipschitz extension property in this particular situation will shed light also on the general case of pairs $(\H^n, \H^m)$.

\bibliographystyle{alpha}

\end{document}